\newtheorem{Thm}{Theorem}[section]
\newtheorem{Lem}[Thm]{Lemma}
\title{A matrix equation $X^n = aI$}
\author[1]{Taehyeok HEO}
\author[1]{Jihoon CHOI\thanks{Corresponding author: gaouls@snu.ac.kr}}
\author[1]{Suh-Ryung KIM}
\affil[1]{Department of Mathematics Education,
Seoul National University, Seoul 151-742, Republic of Korea} 
\begin{document}
\date{}
\maketitle
\begin{abstract}
In this paper, we study a matrix equation $X^n = aI$. We factorize $X^n - aI$ based upon the factorization of $x^n - a$ and then give a necessary and sufficient condition for one of the factors to be the zero matrix.
\end{abstract}

\noindent
{\it Keywords:} matrix equations; non-simple $n$th roots of $aI$; Jordan matrices.

\noindent{\small {{MSC2010:} 15A24}}

\section{Introduction}

A polynomial $x^n - a$ with $n \ge 2$ can be factored into
\[ (x - a^\frac{1}{n})(x^{n-1} + a^\frac{1}{n}x^{n-2} + \cdots + a^\frac{n-2}{n} x + a^\frac{n-1}{n}) \]
if $a \ge 0$ or $n$ is odd. For the same reason, a matrix polynomial $X^n - aI$ with $n \ge 2$ can be factored into
\[ (X - a^\frac{1}{n}I)(X^{n-1} + a^\frac{1}{n}X^{n-2} + \cdots + a^\frac{n-2}{n} X + a^\frac{n-1}{n} I) \]
if $a \ge 0$ or $n$ is odd. From the factorization of $x^n-a$, we know that any root of $x^n-a = 0$ satisfies $x - a^\frac{1}{n} = 0$ or $x^{n-1} + a^\frac{1}{n}x^{n-2} + \cdots + a^\frac{n-2}{n} x + a^\frac{n-1}{n} = 0$. Though the ring $M_k(\mathbb{R})$ is not an integral domain, it is still interesting to ask for which $k,n,a$ the same situation occurs, that is, $X^n - aI = O$ and $X \neq a^\frac{1}{n}I$ imply
\[ X^{n-1} + a^\frac{1}{n}X^{n-2} + \cdots + a^\frac{n-2}{n} X + a^\frac{n-1}{n} I = O. \] Motivated by this question, we will study the sentence
\begin{equation}\label{eqn:sentence}
(\forall X \in M_k(\mathbb{R}))\left[X^n = aI \wedge X \neq a^\frac{1}{n}I \Rightarrow X^{n-1} + a^\frac{1}{n}X^{n-2} + \cdots + a^\frac{n-2}{n} X + a^\frac{n-1}{n} I = O\right]
\end{equation}
to obtain the following theorem.

\begin{Thm}\label{thm:char}
For integers $k,n$ $(k,n \ge 2)$ and $a \in \mathbb{R}$ satisfying the property that if $a < 0$, then $n$ is odd, the sentence
\[ (\forall X \in M_k(\mathbb{R}))\left[X^n = aI \wedge X \neq a^\frac{1}{n}I \Rightarrow X^{n-1} + a^\frac{1}{n}X^{n-2} + \cdots + a^\frac{n-2}{n} X + a^\frac{n-1}{n} I = O\right] \]
becomes true if and only if one of the following holds
\begin{enumerate}
\item[(i)] $a \neq 0$, $k = 2$, and $n$ is odd
\item[(ii)] $a=0$ and $n \ge k+1$.
\end{enumerate}
\end{Thm}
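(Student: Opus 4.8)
The plan is to reduce the sentence to a purely spectral condition and then split on whether $a=0$. Throughout I would write $b=a^{1/n}$ for the real $n$-th root of $a$, which exists and is nonzero when $a\neq 0$ precisely because $n$ is odd whenever $a<0$, and I would set $b=0$ when $a=0$. Denoting the second factor by $P(X)=X^{n-1}+bX^{n-2}+\cdots+b^{n-1}I=\sum_{j=0}^{n-1}b^{\,n-1-j}X^{j}$, the sentence asserts that $X^n=aI$ and $X\neq bI$ force $P(X)=O$.

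For $a=0$ I would note that $b=0$ collapses $P(X)$ to $X^{n-1}$, so the sentence reads ``$X^n=O$ and $X\neq O$ imply $X^{n-1}=O$''. Since $X^n=O$ means $X$ is nilpotent of index at most $n$, the sentence fails exactly when some $k\times k$ matrix has nilpotency index precisely $n$. By the Jordan (or rational canonical) form the maximal nilpotency index of a $k\times k$ matrix is $k$, attained by a single size-$k$ block, and every index $2\le m\le k$ is realized by a size-$m$ block padded with zeros; hence an index-$n$ matrix exists iff $n\le k$, so the sentence is true iff $n\ge k+1$, giving (ii).

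For $a\neq 0$ I would normalize by the substitution $X=bY$, a bijection of $M_k(\mathbb{R})$. Then $X^n=aI\iff Y^n=I$ and $X\neq bI\iff Y\neq I$, while a direct computation gives $P(X)=b^{\,n-1}(I+Y+\cdots+Y^{n-1})$; as $b\neq 0$, the conclusion $P(X)=O$ is equivalent to $\Phi(Y):=I+Y+\cdots+Y^{n-1}=O$. Because $y^n-1$ is separable, $Y^n=I$ forces $Y$ to be diagonalizable over $\mathbb{C}$ with eigenvalues among the $n$-th roots of unity; and since $\Phi(y)=(y^n-1)/(y-1)$ has as its distinct roots exactly the $n$-th roots of unity other than $1$, the identity $\Phi(Y)=O$ holds iff the minimal polynomial of $Y$ divides $\Phi$, iff $1\notin\operatorname{spec}(Y)$. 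Thus the sentence is equivalent to the nonexistence of a real $k\times k$ matrix $Y$ with $Y^n=I$, $Y\neq I$, having $1$ as an eigenvalue.

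It then remains to decide for which $k,n$ such a $Y$ exists, and here the main obstacle is the reality constraint: non-real eigenvalues of a real matrix occur in conjugate pairs, and the only real $n$-th roots of unity are $1$ and (when $n$ is even) $-1$. To build a counterexample one pairs the eigenvalue $1$ with at least one further eigenvalue. If $n$ is even I would take $Y$ similar to $\operatorname{diag}(1,-1)$ (padded by $1$'s), needing only $k\ge 2$; if $n\ge 3$ I would take the direct sum of the eigenvalue $1$ with a real rotation block of angle $2\pi/n$ realizing a non-real conjugate pair, needing $k\ge 3$. Conversely, when $k=2$ and $n$ is odd, a real $2\times 2$ matrix with eigenvalue $1$ has real characteristic polynomial, forcing its second eigenvalue to be real, hence equal to the only odd-order real root of unity, namely $1$; diagonalizability then forces $Y=I$. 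Therefore the counterexample exists precisely when $n$ is even with $k\ge 2$ or when $n\ge 3$ with $k\ge 3$, and fails to exist precisely when $k=2$ and $n$ is odd, yielding (i). The careful bookkeeping of eigenvalue multiplicities against the available dimension $k$, together with checking that each constructed $Y$ is genuinely real and satisfies $Y^n=I$, is the only delicate part.
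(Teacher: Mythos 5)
Your proof is correct, and in the case $a\neq 0$ it takes a genuinely different route from the paper's. The paper performs the same normalization to $a=\pm 1$ and uses the same kind of witnesses (rotation blocks $R_{2\pi/n}$, diagonal entries $\pm 1$, the block $\left(\begin{smallmatrix}0&1\\1&0\end{smallmatrix}\right)$), but its core mechanism is computational: an induction formula for $A^n$ when $A\in M_2(\mathbb{C})$ is upper triangular (Lemma~\ref{lem:cyclotomic}), which forces the Jordan form of a non-simple $2\times 2$ root of $I$ to have diagonal $(\zeta_n^u,\zeta_n^{n-u})$ with $u\neq 0$; this is combined with the real factorization $A^{n-1}+\cdots+I=\prod_{w=1}^{(n-1)/2}\left(A^2-2\cos(2\pi w/n)A+I\right)$, one factor of which then vanishes by Cayley--Hamilton, while the counterexamples are verified by explicit entry or block computations. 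You replace all of this by a single spectral criterion: separability of $y^n-1$ makes any solution of $Y^n=I$ diagonalizable over $\mathbb{C}$, so $I+Y+\cdots+Y^{n-1}=O$ holds iff the squarefree minimal polynomial of $Y$ divides $(y^n-1)/(y-1)$, iff $1\notin\operatorname{spec}(Y)$. That criterion handles both directions at once: counterexamples need only exhibit the eigenvalue $1$ alongside some other $n$th root of unity (reality of the matrix being the only constraint), and the positive case $k=2$, $n$ odd falls out because the second eigenvalue of a real $2\times 2$ matrix with eigenvalue $1$ is real, hence equals $1$ ($n$ being odd), hence $Y=I$ by diagonalizability. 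Your scaling $X=a^{1/n}Y$ also treats $a>0$ and $a<0$ uniformly, where the paper handles $a=-1$ separately via the substitution $A\mapsto -A$. What the paper's longer route buys is self-containedness (only Jordan form, Cayley--Hamilton, and block multiplication are invoked) and the explicit quadratic factorization, which is the structural theme reused in its Theorem~\ref{thm:char2}; your approach is shorter and makes the role of the eigenvalue $1$ transparent. Your $a=0$ argument coincides with the paper's.
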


Suppose that $n$ is even and $a<0$. Then $x^n-a$ cannot have a linear factor over $\mathbb{R}$, so it is not meaningful to consider the sentence (\ref{eqn:sentence}) for the matrix equation $X^n - aI = O$ if $n$ is even and $a<0$. However, the polynomial $x^n -a$ can be factored into
\[ (x+(-a)^\frac{1}{n}\zeta)(x+ (-a)^\frac{1}{n}\zeta^3)\cdots(x+ (-a)^\frac{1}{n}\zeta^{2n-1}) = \prod_{i=1}^{n/2} \left( x^2 + (-a)^\frac{1}{n} \cos \frac{(2i-1)\pi}{n} x + (-a)^\frac{2}{n} \right) \]
where $\zeta = \exp(\frac{\pi i}{n})$. For the same reason, a matrix polynomial $X^n - aI$ can be factored into
\[ \prod_{i=1}^{n/2} \left( X^2 + (-a)^\frac{1}{n} \cos \frac{(2i-1)\pi}{n} X + (-a)^\frac{2}{n}I \right) \]
if $n$ is even and $a<0$. In the same context as the case where $n \ge 2$ and $a \ge 0$, or $n$ is odd, we may ask for which $k,n,a$ $X^n - aI = O$ implies
\[ X^2 + (-a)^\frac{1}{n} \cos \frac{(2i-1)\pi}{n} X + (-a)^\frac{2}{n}I = O \]
for some $i \in \{1,2,\ldots,\frac{n}{2}\}$. Based on this question, if $n$ is even and $a<0$, we will study the sentence
\begin{equation}\label{eqn:sentence2}
(\forall X \in M_k(\mathbb{R}))\left[ X^n = aI \Rightarrow \left( \exists i \in \left\{1,2,\ldots,\frac{n}{2} \right\} \right) \left[X^2 + (-a)^\frac{1}{n} \cos \frac{(2i-1)\pi}{n} X + (-a)^\frac{2}{n}I = O\right] \right]
\end{equation}
to present the following theorem.

\begin{Thm}\label{thm:char2}
For integers $k,n$ $(k,n \ge 2)$ with $n$ is even and $a < 0$, the sentence
\[ (\forall X \in M_k(\mathbb{R}))\left[ X^n = aI \Rightarrow \left( \exists i \in \left\{1,2,\ldots,\frac{n}{2} \right\} \right) \left[X^2 + (-a)^\frac{1}{n} \cos \frac{(2i-1)\pi}{n} X + (-a)^\frac{2}{n}I = O\right] \right] \]
becomes true if and only if $k$ is odd, or $k$ is even and $n=2$.
\end{Thm}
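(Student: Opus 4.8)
The plan is to convert the sentence into a statement about the eigenvalue structure of real matrices, and then to locate precisely when a ``mixing'' obstruction can occur. First I would record that, since $a<0$ and $n$ is even, the polynomial $x^n-a$ has $n$ distinct non-real roots; hence it is squarefree and equals the product of the $n/2$ distinct irreducible real quadratics $q_1,\dots,q_{n/2}$ exhibited in the factorization above. Because $X^n=aI$ forces the minimal polynomial of $X$ to divide the squarefree polynomial $x^n-a$, every solution $X$ is semisimple and real-similar to a block-diagonal matrix whose $2\times2$ diagonal blocks each have characteristic polynomial one of the $q_i$. Under this normal form, the conclusion ``$q_i(X)=O$ for some $i$'' is equivalent to the minimal polynomial of $X$ being a single $q_i$, i.e.\ to all the conjugate eigenvalue-pairs of $X$ coinciding. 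Thus the sentence holds exactly when no solution $X$ combines two distinct pairs $q_i\neq q_j$.

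For the backward implication I would dispatch the two asserted cases directly. If $k$ is odd, taking determinants in $X^n=aI$ gives $(\det X)^n=a^k$; since $n$ is even the left side is $\ge 0$ while $a^k<0$, so no solution $X$ exists and the sentence is vacuously true. If $k$ is even and $n=2$, then $n/2=1$, so there is only the single factor $q_1(x)=x^2-a$, and $X^2=aI$ yields $q_1(X)=O$ outright; the sentence holds.

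The substance lies in the converse: for $k$ even I must show the sentence fails whenever $n\ge 4$. The natural device is the scaled rotation $R_i=(-a)^{1/n}\bigl(\begin{smallmatrix}\cos\theta_i&-\sin\theta_i\\\sin\theta_i&\cos\theta_i\end{smallmatrix}\bigr)$ with $\theta_i=\frac{(2i-1)\pi}{n}$, which satisfies $R_i^n=aI$ and has characteristic polynomial $q_i$. For even $k\ge 4$ and $n\ge 4$ (so that at least two distinct $q_i$ are available) I would take $X=\operatorname{diag}(R_1,R_2,R_1,\dots)$ built from $k/2$ such blocks using at least two different indices; then $X^n=aI$, while the minimal polynomial of $X$ is a product of two or more distinct $q_i$ and so is annihilated by no single factor, whence the sentence is false. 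The main obstacle, and the delicate heart of the ``only if'' direction, is the case $k=2$: here there is room for a single $2\times2$ block only, the block-diagonal construction cannot mix two pairs, and Cayley--Hamilton forces any $2\times2$ solution to satisfy its own quadratic characteristic polynomial, which is one of the $q_i$. Settling $k=2$ for $n\ge 4$ in the manner the statement demands---producing a genuinely mixed solution, or otherwise certifying failure of the sentence there---is exactly the step I expect to be hardest, since the two-dimensional geometry leaves no slack for combining distinct conjugate pairs.
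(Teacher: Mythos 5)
Your proposal follows the paper's proof in substance for three of the four cases: the reduction to the quadratic factors, the vacuous truth for odd $k$, the trivial case $n=2$, and the mixed block-diagonal counterexample $\operatorname{diag}(R_1,R_2,\dots)$ for even $k\ge 4$ and $n\ge 4$ are all exactly what the paper does. (Your determinant argument for odd $k$ --- $(\det X)^n=a^k$ is nonnegative while $a^k<0$ --- is in fact cleaner than the paper's route through the Jordan form and the constant term of the characteristic polynomial.)

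The case you flagged as the ``delicate heart,'' namely $k=2$ with $n\ge 4$, is not a gap in your proposal: it is an error in the theorem itself, and you have already written down the decisive observation. A real $2\times 2$ solution of $X^n=aI$ has no real eigenvalues (as $n$ is even and $a<0$), hence has a conjugate pair of eigenvalues among the roots of $x^n-a$, hence by Cayley--Hamilton satisfies its own characteristic polynomial, which is one of the quadratics $q_i$. So the sentence is \emph{true} for $k=2$ and every even $n$, and the ``only if'' direction of the theorem as stated is false; the correct characterization is ``$k$ odd, or $k=2$, or $n=2$.'' The paper misses this because its counterexample $\operatorname{diag}(R_1,R_2,\dots,R_2)$ tacitly contains both an $R_1$ block and an $R_2$ block, which requires $k\ge 4$; for $k=2$ it degenerates to the single block $R_1$, which satisfies $R_1^2-2\cos(\pi/n)R_1+I=O$ and is therefore not a counterexample at all. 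So rather than hunting for a ``genuinely mixed'' $2\times 2$ solution (none exists, as you observe --- a $2\times 2$ real matrix has no room for two distinct conjugate pairs), you should promote your Cayley--Hamilton remark to a positive proof that the sentence holds when $k=2$, and record that the theorem's statement needs to be amended accordingly.
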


We will prove Theorem~\ref{thm:char} in Section~2 and Theorem~\ref{thm:char2} in Section~3. For undefined terms, the reader may refer to \cite{HJ}.


\section{A proof of Theorem~\ref{thm:char}}

We take a matrix $A$ with real entries. We call $A$ a {\it non-simple $n$th root of $aI$} if it satisfies
\[  A^n = aI \mbox{ and } A \neq a^\frac{1}{n}I.  \]

We first show that Theorem~\ref{thm:char} holds for $a=0$.  

To show the `only if' part, we consider the case $a = 0$ and $n \le k$. We define the matrix $A = (a_{ij})$ by
\[ a_{ij} = \begin{cases} 1 & \mbox{if } j = i+k-n+1; \\ 0 & \mbox{otherwise}. \end{cases} \]
See the matrix below for an illustration for $n=k$:
\[ A = \begin{pmatrix} 0 & 1 & 0 & 0 & \cdots & 0 & 0 \\ 0 & 0 & 1 & 0 & \cdots & 0 & 0 \\ 0 & 0 & 0 & 1 & \cdots & 0 & 0 \\ \vdots & \vdots & \vdots & \vdots & \ddots & \vdots & \vdots \\ 0 & 0 & 0 & 0 & \cdots & 0 & 1 \\ 0 & 0 & 0 & 0 & \cdots & 0 & 0 \end{pmatrix}. \]
It can easily be checked that $A \neq O, A^n=O$ but $A^{n-1} \neq O$. Therefore the `only if' part of Theorem~\ref{thm:char} follows if $a=0$.

To show the `if' part, we consider the case $a = 0$ and $n \ge k+1$. Suppose that $A^n = O$. Then $0$ is the only eigenvalue of $A$ and so the Jordan matrix of $A$ is of the form
\[ J_A = \begin{pmatrix} J_{n_1}(0) & O & O  & \cdots & O & O \\ O & J_{n_2}(0) & O & \cdots & O & O \\ O & O & J_{n_3}(0) & \cdots & O & O \\ \vdots & \vdots & \vdots & \ddots & \vdots & \vdots \\
O & O & O & \cdots & J_{n_{l-1}}(0) & O \\ O & O & O & \cdots & O & J_{n_l}(0) \end{pmatrix} \]
where $n_1 + \cdots + n_l = k$ and $J_{n_i}(0)$ is the Jordan block of order $n_i$ with eigenvalue $0$. Since $J_A$ is a strictly upper triangular matrix of order $k$, it is true that $(J_A)^k = O$. Because $A$ is similar to $J_A$, $A^k = O$. Then $A^{n-1} = A^k A^{n-k-1} = O A^{n-k-1} = O$. Therefore the `if' part of Theorem~\ref{thm:char} follows if $a=0$.

Now we show Theorem~\ref{thm:char} when $a \neq 0$. If $a > 0$, then
\begin{equation}\label{simple}
A^n=aI \Leftrightarrow \left(a^{-\frac{1}{n}} A \right)^n = I.
\end{equation}
Therefore, by substituting $a^\frac{1}{n} X$ or $a^{-\frac{1}{n}} X$ into $X$, it is sufficient to consider the case $a=1$ if $a>0$. Suppose $a<0$. Note that $X^n = aI = -(-a)I$. Then, since $-a>0$, by (\ref{simple}),
\begin{equation}\label{simple2}
A^n = aI \Leftrightarrow \left( (-a)^{-\frac{1}{n}} A \right)^n = -I
\end{equation} 
and therefore it is sufficient to consider the case $a = -1$.

We need the following lemma.

\begin{Lem}
Suppose that $\left( \begin{smallmatrix} p & q \\ 0 & r \end{smallmatrix} \right) \in M_2(\mathbb{C})$ is a non-simple $n$th root of $I$ for an integer $n \ge 2$. Then
\[ (p, r)=((\zeta_n)^u, (\zeta_n)^v) \]
for some $u, v \in \{ 0, 1, \ldots, n-1\}$. where $\zeta_n=\exp(2\pi i/n)$.
\label{lem:cyclotomic}
\end{Lem}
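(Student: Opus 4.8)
The plan is to exploit the upper-triangular shape of the matrix directly and read off the diagonal of its $n$th power. Writing $M = \left(\begin{smallmatrix} p & q \\ 0 & r \end{smallmatrix}\right)$, I would first record the elementary fact that a product of upper-triangular matrices is again upper-triangular, with diagonal equal to the entrywise product of the diagonals. A one-line induction on $k$ then gives
\[ M^k = \begin{pmatrix} p^k & * \\ 0 & r^k \end{pmatrix} \]
for every $k \ge 1$, where the off-diagonal entry $*$ is immaterial for the conclusion.

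Next I would feed in the hypothesis $M^n = I$. Comparing the $(1,1)$ and $(2,2)$ entries of $M^n = \left(\begin{smallmatrix} p^n & * \\ 0 & r^n \end{smallmatrix}\right)$ against those of the identity forces $p^n = 1$ and $r^n = 1$, so that both $p$ and $r$ are roots of $z^n = 1$ in $\mathbb{C}$. Since the solution set of $z^n = 1$ over $\mathbb{C}$ is exactly the group of $n$th roots of unity $\{\zeta_n^j : j \in \{0,1,\ldots,n-1\}\}$ with $\zeta_n = \exp(2\pi i/n)$, there must exist $u, v \in \{0,1,\ldots,n-1\}$ with $p = \zeta_n^u$ and $r = \zeta_n^v$, which is precisely the assertion.

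The notable feature here is that the ``non-simple'' part of the hypothesis, namely $M \neq I$, is never invoked: the statement about the diagonal entries follows from $M^n = I$ alone, and the triangular structure does all of the work. For this reason I do not anticipate any genuine obstacle; the only points requiring a moment's care are the induction establishing the diagonal entries of $M^k$ and the standard fact that $z^n = 1$ has exactly the $n$th roots of unity as its complex solutions. (One could alternatively phrase the argument spectrally, observing that $p, r$ are the eigenvalues of $M$ and hence $p^n, r^n$ are the eigenvalues of $M^n = I$, but the direct diagonal computation is shorter and self-contained.)
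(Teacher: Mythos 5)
Your proof is correct, and it takes a genuinely different and simpler route than the paper's. The paper proves, by induction together with the Cayley--Hamilton theorem, the identity
\[
A^n = (p^{n-1} + p^{n-2}r + \cdots + pr^{n-2} + r^{n-1})A - pr(p^{n-2} + \cdots + r^{n-2})I,
\]
then equates the entries of $A^n$ with those of $I$ and solves the resulting system of scalar equations, splitting into the cases $q = 0$ and $q \neq 0$. You bypass all of this by using only the triangular structure: the diagonal of a power of an upper-triangular matrix is the entrywise power of its diagonal, so $M^n = I$ immediately forces $p^n = r^n = 1$, hence $p$ and $r$ are $n$th roots of unity. Both arguments are complete, but yours is shorter, needs no case analysis, and makes transparent a point you rightly flag: the hypothesis $M \neq a^{1/n}I$ (here $M \neq I$) is never needed, so the conclusion holds for \emph{every} upper-triangular $n$th root of $I$. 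What the paper's computation buys in exchange is extra scalar information, notably the $(1,2)$-entry relation $q\left(p^{n-1} + p^{n-2}r + \cdots + r^{n-1}\right) = 0$, which constrains $p$ and $r$ further when $q \neq 0$; but that surplus is not part of the lemma's statement and is not used elsewhere in the paper, so nothing is lost by your more elementary argument.
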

\begin{proof}
For notational convenience, let $A = \left( \begin{smallmatrix} p & q \\ 0 & r \end{smallmatrix} \right)$. We first prove by induction on $n$ that
\begin{equation} \label{eqn:comp}
A^n = (p^{n-1} + p^{n-2}r + \cdots + pr^{n-2}+r^{n-1})A -pr(p^{n-2} + \cdots + r^{n-2})I.
\end{equation}
The statement~(\ref{eqn:comp}) is true for $n=2$ by the Cayley-Hamilton Theorem. Suppose that (\ref{eqn:comp}) is true for $n$. Then, by the induction hypothesis,
\begin{align*}
A^{n+1} &= A^nA = (p^{n-1} + p^{n-2}r + \cdots + pr^{n-2}+r^{n-1})A^2 -pr(p^{n-2} + \cdots + r^{n-2})A \\
&= (p^{n-1} + p^{n-2}r + \cdots + pr^{n-2}+r^{n-1})((p+r)A - prI) -pr(p^{n-2} + \cdots + r^{n-2})A.
\end{align*}
By simplifying the right-hand side of the second equality, we can check that (\ref{eqn:comp}) is true for $n+1$.

Now, since $A$ is an $n$th root of $I$, by~(\ref{eqn:comp}),
\[ I = A^n = (p^{n-1}+p^{n-2}r + \cdots + pr^{n-2}+r^{n-1})A -pr(p^{n-2}+p^{n-3}r + \cdots + pr^{n-3}+r^{n-2})I \]
and, by comparing $(1,2)$ and $(2,2)$ entries of the matrix on the right with those of $I$ on the left, we obtain the system of equations
\[ q \left(p^{n-1}+p^{n-2}r + \cdots + pr^{n-2}+r^{n-1}\right) = 0,\]
and
\[ 1 = (p^{n-1} + p^{n-2}r + \cdots + pr^{n-2}+r^{n-1})r -  pr(p^{n-2}+p^{n-3}r + \cdots + pr^{n-3}+r^{n-2}) \]
or
\[ 1 = r^n. \]
If $q=0$, then $I = A^n= \left( \begin{smallmatrix} p^n & 0 \\ 0 & r^n \end{smallmatrix} \right)$ and so the lemma follows. If $q \neq 0$, then by solving this system, we have
\[ (p, r)=((\zeta_n)^u, (\zeta_n)^v) \]
for some $u, v \in \{ 0, 1, \ldots, n-1\}$.
\end{proof}

To show the `if' part, take a non-simple $n$th root $A \in M_2(\mathbb{R})$ of $I$. Since $n$ is odd, 
\begin{equation} \label{eqn:factorization}
A^{n-1} + \cdots + A + I = \prod_{w=1}^{(n-1)/2} \left(A^2-2\cos \left( \frac{2\pi}{n} w \right) A + I \right).
\end{equation}
Let $J_A := \left( \begin{smallmatrix} p & q \\ 0 & r \end{smallmatrix} \right)$ be the Jordan matrix of $A$. Since $A$ is similar to $J_A$, $J_A$ is also a non-simple $n$th root of $I$. By Lemma~\ref{lem:cyclotomic}, 
\begin{equation} \label{eqn:pair}
(p, r)=((\zeta_n)^u, (\zeta_n)^v)
\end{equation}
for some $u, v \in \{ 0, 1, \ldots, n-1\}$. Moreover, by the similarity, $\det(A-\lambda I)=\det(J_A-\lambda I)$. Since $A \in M_2(\mathbb{R})$, $\det(A-\lambda I)$ is a polynomial over $\mathbb{R}$ and so is $\det(J_A - \lambda I)$. Then $p+r$ and $pr$ are in $\mathbb{R}$ and so $u+v = n$ for $u,v$ in (\ref{eqn:pair}). Therefore, by the symmetry of $u$ and $v$,
\begin{equation} \label{eqn:quadratic}
A^2-2\cos \left( \frac{2\pi}{n} u\right) A + I = O
\end{equation}
for some $u \in \{0,1,\ldots,\frac{n-1}{2}\}$. Suppose that $u=0$. Then $v=n$ and so $J_A = \left( \begin{smallmatrix} 1 & q \\ 0 & 1 \end{smallmatrix} \right)$. However, $(J_A)^n = \left( \begin{smallmatrix} 1 & nq \\ 0 & 1 \end{smallmatrix} \right)$ which cannot equal $I$ unless $J_A = I$, and we reach a contradiction to the fact that $J_A$ is a non-simple $n$th root of $I$. Thus $u \in \{1,2,\ldots,\frac{n-1}{2}\}$ in the statement (\ref{eqn:quadratic}) and so, by (\ref{eqn:factorization}), 
\[ A^{n-1} + \cdots + A + I = O. \]

Now take a non-simple $n$th root $A \in M_2(\mathbb{R})$ of $-I$. If $n$ is odd, then $-A$ is a non-simple $n$th root of $I$ and so
\[ O = (-A)^{n-1} + (-A)^{n-2} + \cdots + (-A) + I = A^{n-1} - A^{n-2} + \cdots - A + I. \]
Hence we have shown that the `if' part of Theorem~\ref{thm:char} is true when $a \neq 0$.

It remains to show the `only if' part, that is, the sentence (\ref{eqn:sentence}) is not true if either $a \neq 0$ and $k \ge 3$, or $a \neq 0$ and $n$ is even. We will give a counterexample for each of the following cases:
\begin{table}[h]
\begin{center}
\begin{tabular}{c|c|c}
& $a=1$ & $a=-1$ \\ \hline
$n$ is even, $k$ is even & (i) &  \\ \hline
$n$ is even, $k$ is odd & (ii) &  \\ \hline
$n$ is odd, $k$ is even $(k \ge 3)$ & (iii) & (v) \\ \hline
$n$ is odd, $k$ is odd & (iv) & (vi)
\end{tabular}
\end{center}
\end{table}

We denote the matrix $\left( \begin{smallmatrix} 0 & 1 \\ 1 & 0 \end{smallmatrix} \right)$ by $T$, the zero matrix of order two by $O_2$, and the rotation matrix
\[ \begin{pmatrix} \cos {\theta} & -\sin {\theta} \\ \sin {\theta} & \cos {\theta}  \end{pmatrix} \]
by $R_\theta$. In addition, we distinguish identity matrices by denoting the identity matrix of order $l$ by $I_l$.

\noindent(i) {\it $n$ is even and $k$ is even.} We take the matrix of order $k$
\[ A := \begin{pmatrix} T & O_2 & \cdots & O_2 \\ O_2 & T & \cdots & O_2 \\ \vdots & \vdots & \ddots & \vdots \\ O_2 & O_2 & \cdots & T \end{pmatrix} \]
By block multiplication,
\[ A^n = \begin{pmatrix} T^n & O_2 & \cdots & O_2 \\ O_2 & T^n & \cdots & O_2 \\ \vdots & \vdots & \ddots & \vdots \\ O_2 & O_2 & \cdots & T^n \end{pmatrix} = I_k \]
as an even power of $T$ is the identity matrix of order two. Since all of the diagonal entries of $A$ are zero, obviously $A \neq I_k$. However,
\begin{eqnarray*}
&& A^{n-1} + a^\frac{1}{n}A^{n-2} + a^\frac{2}{n}A^{n-3} + a^\frac{3}{n}A^{n-4} + \cdots + a^\frac{n-2}{n} A + a^\frac{n-1}{n} I_k \\
&=& A^{n-1} + A^{n-2} + A^{n-3} + A^{n-4} + \cdots + A + I_k \\
&=& A + I_k + A + I_k + \cdots + A + I_k \\
&=& \frac{n}{2}(A+I_k) \neq O,
\end{eqnarray*}
so $A$ is a counterexample to the sentence (\ref{eqn:sentence}).

\noindent(ii) {\it $n$ is even and $k$ is odd.} We take the matrix of order $k$
\[ \begin{pmatrix} 1 & 0 & 0 & \cdots & 0 \\ 0 & T & O_2 & \cdots & O_2 \\ 0 & O_2 & T & \cdots & O_2 \\ \vdots & \vdots & \vdots & \ddots & \vdots \\ 0 & O & O & \cdots & T \end{pmatrix}. \]
By applying a similar argument for the case (i), we may show that the given matrix is a counterexample to the sentence (\ref{eqn:sentence}).

\noindent(iii) {\it $n$ is odd and $k$ is even $(k \ge 3)$.} We take the matrix of order $k$
\[ A:=\begin{pmatrix} I_2 & O_2 & O_2 & \cdots & O_2 \\ O_2 & R_{2\pi/n} & O_2 & \cdots & O_2 \\ O_2 & O_2 & R_{2\pi/n} & \cdots & O_2 \\ \vdots & \vdots & \vdots & \ddots & \vdots \\ O_2 & O_2 & O_2 & \cdots & R_{2\pi/n} \end{pmatrix}. \]
By block multiplication,
\[ A^n = \begin{pmatrix} (I_2)^n & O_2 & \cdots & O_2 \\ O_2 & (R_{2\pi/n})^n & \cdots & O_2 \\ \vdots & \vdots & \ddots & \vdots \\ O_2 & O_2 & \cdots & (R_{2\pi/n})^n \end{pmatrix} = I_k \]
as the $n$th power of $R_{2\pi/n}$ is the identity matrix of order two. Since $k \ge 3$, $(3,3)$ entry of $A$ exists and, by the hypothesis that $n \ge 3$, the $(3,3)$ entry of $A$ is not equal to $1$. However, the $(1,1)$ entry of $A$ is $1$, so $A \neq I_k$. Moreover, the $(1,1)$ entry of $A^i$ equals $1$ for any nonnegative integer $i$, so the $(1,1)$ entry of $A^{n-1} + a^\frac{1}{n}A^{n-2} + \cdots + a^\frac{n-2}{n} A + a^\frac{n-1}{n} I_k $ cannot be zero. Thus $A^{n-1} + a^\frac{1}{n}A^{n-2} + \cdots + a^\frac{n-2}{n} A + a^\frac{n-1}{n} I_k \neq O$ and so $A$ is a counterexample to the sentence (\ref{eqn:sentence}).

\noindent(iv) {\it $n$ is odd and $k$ is odd $(k \ge 3)$.} We take the matrix of order $k$
\[ \begin{pmatrix} 1 & 0 & 0 & \cdots & 0 \\ 0 & R_{2\pi/n} & O_2 & \cdots & O_2 \\ 0& O_2 & R_{2\pi/n} & \cdots & O_2 \\ \vdots & \vdots & \vdots & \ddots & \vdots \\ 0 & O_2 & O_2 & \cdots & R_{2\pi/n} \end{pmatrix}. \]
By applying a similar argument for the case (iii), we may show that the given matrix is a counterexample to the sentence (\ref{eqn:sentence}).

\noindent(v) {\it $n$ is odd and $k$ is even $(k \ge 3)$.} We take the matrix of order $k$
\[ A := \begin{pmatrix} -I_2 & O_2 & O_2 & \cdots & O_2 \\ O_2 & -R_{2\pi/n} & O_2 & \cdots & O_2 \\ O_2 & O_2 & -R_{2\pi/n} & \cdots & O_2 \\ \vdots & \vdots & \vdots & \ddots & \vdots \\ O_2 & O_2 & O_2 & \cdots & -R_{2\pi/n} \end{pmatrix}. \]
By block multiplication,
\[ A^n = \begin{pmatrix} (-I_2)^n & O_2 & \cdots & O_2 \\ O_2 & (-R_{2\pi/n})^n & \cdots & O_2 \\ \vdots & \vdots & \ddots & \vdots \\ O_2 & O_2 & \cdots & (-R_{2\pi/n})^n \end{pmatrix} = -I_k \]
as the $n$th power of $-R_{2\pi/n}$ equals $-I_2$. Since $k \ge 3$, $(3,3)$ entry of $A$ exists and, by the hypothesis that $n \ge 3$, the $(3,3)$ entry of $A$ is not equal to $-1$. However, the $(1,1)$ entry of $A$ is $-1$, so $A \neq -I$. However,
\begin{eqnarray*}
&& A^{n-1} + a^\frac{1}{n}A^{n-2} + a^\frac{2}{n}A^{n-3} + a^\frac{3}{n}A^{n-4} + \cdots + a^\frac{n-2}{n} A + a^\frac{n-1}{n} I_k \\
&=& A^{n-1} - A^{n-2} + A^{n-3} - A^{n-4} + \cdots - A + I_k.
\end{eqnarray*}
Now, the $(1,1)$ entry of $A^i$ equals 1 if $i$ is even and $-1$ if $i$ is odd. Therefore the $(1,1)$ entry of $A^{n-1} - A^{n-2} + A^{n-3} - A^{n-4} + \cdots - A + I_k$ equals $n$ and so $A^{n-1} - A^{n-2} + A^{n-3} - A^{n-4} + \cdots - A + I_k \neq O$. Hence $A$ is a counterexample to the sentence (\ref{eqn:sentence}).

\noindent(vi) {\it $n$ is odd and $k$ is odd.} We take the matrix of order $k$
\[ \begin{pmatrix} -1 & 0 & 0 & \cdots & 0 \\ 0 & -R_{2\pi/n} & O_2 & \cdots & O_2 \\ 0& O_2 & -R_{2\pi/n} & \cdots & O_2 \\ \vdots & \vdots & \vdots & \ddots & \vdots \\ 0 & O_2 & O_2 & \cdots & -R_{2\pi/n} \end{pmatrix}. \]
By applying a similar argument for the case (v), we may show that the given matrix is a counterexample to the sentence (\ref{eqn:sentence}). Hence we have shown the `only if' part of Theorem~\ref{thm:char} and the proof of Theorem~\ref{thm:char} is complete.


\section{A proof of Theorem~\ref{thm:char2}}

In this section, it is assumed that $n$ is even and $a<0$. By (\ref{simple2}), it is sufficient to consider the case $a = -1$.

First we show the `if' part of Theorem~\ref{thm:char2}.

Suppose that $k$ is odd. We will show that there is no matrix whose $n$th power equals $-I$.
Assume, to the contrary, that there exists $A \in M_k(\mathbb{R})$ such that $A^n = -I$. Then, for the Jordan matrix $J_A$ of $A$, the following holds:
\begin{equation} \label{eqn:negative}
(J_A)^n=-I
\end{equation}
and
\begin{equation} \label{eqn:det}
\det(A-\lambda I)=\det(J_A - \lambda I).
\end{equation}
We denote the $(j,j)$ entry of $J_A$ by $a_j$ for each $j=1, 2, \ldots, k$. Since $J_A$ is upper triangular, taking the $n$th power of $J_A$ gives diagonal elements the $n$th power of diagonal elements of $J_A$. By (\ref{eqn:negative}), $a_j^n=-1$. Since $A \in M_k(\mathbb{R})$, $\det(A-\lambda I)$ is a polynomial in $\lambda$ with real coefficients and so is $\det(J_A - \lambda I)$ by (\ref{eqn:det}).
Therefore the constant term $-a_1a_2 \cdots a_k$ is real. On the other hand, since $k$ is odd,
\[ (a_1a_2 \cdots a_k)^n = (a_1)^n (a_2)^n \cdots (a_k)^n = (-1)^k = -1. \]
However, since $n$ is even, there is no real $a_1a_2 \cdots a_k$ satisfying the last equality and we reach a contradiction. Hence there is no matrix whose $n$th power equals $-I$ and the `if' part is vacuously true if $k$ is odd.

Now suppose that $k$ is even and $n = 2$. Then the sentence (\ref{eqn:sentence2}) becomes
\[ (\forall X \in M_k(\mathbb{R}))\left[ X^2 = -I \Rightarrow X^2 + I = O \right], \]
which is trivially true. Hence the `if' part holds.

We show the `only if' part by giving a counterexample to the sentence (\ref{eqn:sentence2}) when $k$ is even and $n \ge 4$. For a notational convenience, we denote
\[ \begin{pmatrix} \displaystyle \cos\frac{(2j-1)\pi}{n} & \displaystyle -\sin \frac{(2j-1)\pi}{n} \\ \displaystyle \sin\frac{(2j-1)\pi}{n} & \displaystyle \cos\frac{(2j-1)\pi}{n} \end{pmatrix}. \]
by $R_j$ instead of $R_{{(2j-1)\pi}/{n}}$. Now we take the following matrix
\[ A = \begin{pmatrix} R_1 & O & O & \cdots & O \\ O & R_2 & O & \cdots & O \\ O & O & R_2 & \cdots & O \\ \vdots & \vdots & \vdots & \ddots & \vdots \\ O & O & O & \cdots & R_2 \end{pmatrix}. \]
Since $(R_j)^n = -I$ for $j=1,2,\ldots,\frac{n}{2}$, $A^n = -I$.

Take any $i \in \{ 1,2,\ldots,\frac{n}{2} \}$. By the Cayley-Hamilton Theorem, $$(R_j)^2-2\cos\frac{(2j-1)\pi}{n}R_j+I=O$$ for each $j=1,2,\ldots,\frac{n}{2}$. Then, if $i=1$, 
$$(R_2)^2-2\cos\frac{\pi}{n}R_2+I = \left( -2\cos\frac{\pi}{n} + 2\cos\frac{3\pi}{n} \right) R_2 \neq O$$
and so $A^2-2\cos\frac{\pi}{n}A+I \neq O$. If $i \neq 1$, then 
$$(R_1)^2-2\cos\frac{(2i-1)\pi}{n}R_1+I = \left( -2\cos\frac{(2i-1)\pi}{n} + 2\cos\frac{\pi}{n} \right) R_1 \neq O$$
and so and so $A^2-2\cos\frac{(2i-1)\pi}{n}A+I \neq O$. Thus $A$ is a counterexample to the sentence (\ref{eqn:sentence2}) and we complete the proof of Theorem~\ref{thm:char2}.

\section{Closing remarks}
We may consider the complex number version of Sentence (\ref{eqn:sentence})
\[
(\forall X \in M_k(\mathbb{C}))\left[X^n = aI \wedge X \neq a^\frac{1}{n}I \Rightarrow X^{n-1} + a^\frac{1}{n}X^{n-2} + \cdots + a^\frac{n-2}{n} X + a^\frac{n-1}{n} I = O\right]
\]
for integers $k, n$ with $k,n \ge 2$ and $a \in \mathbb{C}$. However, it cannot happen except the case $a=0$ and $n \ge k+1$. If $a=0$, then the same argument for the real number case is applied. If $a \neq 0$, then the matrix
\[\begin{pmatrix} a^{\frac{1}{n}} & 0 & 0 & \cdots & 0 \\ 0 & a^{\frac{1}{n}}\zeta_n & 0 & \cdots & 0 \\ 0 & 0 & a^{\frac{1}{n}}\zeta_n & \cdots & 0 \\ \vdots & \vdots & \vdots & \ddots & \vdots \\ 0 & 0 & 0 & \cdots & a^{\frac{1}{n}}\zeta_n \end{pmatrix} \]
becomes a counterexample when $a^\frac{1}{n}$ is a number satisfying $z^n = a$ and  $\zeta_n = \exp(\frac{2\pi i}{n})$.

\end{document}